\renewcommand*\subjclass[2][2000]{%
  \def\@subjclass{#2}%
  \@ifundefined{subjclassname@#1}{%
    \ClassWarning{\@classname}{Unknown edition (#1) of Mathematics
      Subject Classification; using '1991'.}%
  }{%
    \@xp\let\@xp\subjclassname\csname subjclassname@#1\endcsname
  }%
}
\newtheorem{theorem}{Theorem}[section]
\newtheorem{lemma}[theorem]{Lemma}
\newtheorem*{lemma*}{Lemma}
\theoremstyle{definition}
\theoremstyle{remark}
\numberwithin{equation}{section}
\def\XXint#1#2#3{{\setbox0=\hbox{$#1{#2#3}{\int\limits}$}
\vcenter{\hbox{$#2#3$}}\kern-.5\wd0}}
\begin{document}

\title{Quasiconformal harmonic mappings between the unit ball and a spatial domain with $C^{1,\alpha}$ boundary}
\subjclass{Primary  	30C65;
Secondary 31B05}
\keywords{Harmonic mappings, Quasiconformal mappings, H\"{o}lder continuity, Lipschitz continuity}

\address{University of Montenegro, Faculty of Natural Sciences and
Mathematics, Cetinjski put b.b. 81000 Podgorica, Montenegro}
\email{antondj@ucg.ac.me}

\email{davidk@ucg.ac.me}

 \author{Anton Gjokaj and David Kalaj}

\begin{abstract}
We prove the following. If $f$ is a harmonic quasiconformal mapping between the unit ball in $\mathbb{R}^n$ and a spatial domain with $C^{1,\alpha}$ boundary, then $f$ is Lipschitz continuous in $B$. This generalizes some known results for $n=2$ and improves some others in higher dimensional case.
\end{abstract}
\maketitle

\section{Introduction}

For $n > 1$, let $\mathbb{R}^n$ be the standard Euclidean space with the norm $|x|=(x_1^2+\ldots+x_n^2)^{\frac{1}{2}}$, where $x=(x_1,\ldots,x_n).$ We denote the unit ball $\{x \in \mathbb{R}^n: |x|<1\}$ by $B$, and its boundary, the unit sphere $\{x \in \mathbb{R}^n: |x|=1\}$ by $S.$ \\

Let $U\subset \mathbb{R}^n$ be a domain. We say $f=(f_1,\ldots,f_n):U\to \mathbb{R}^n$ is a harmonic mapping if the functions $f_j$ are harmonic real mappings, i.e. satisfy the $n$-dimensional Laplace equation
\[ \Delta f_j=\sum_{i=1}^{n} D_{ii}f_j=0.\]
Let
\[ P(x,\xi)=\frac{1-|x|^2}{|x-\xi|^n}\]
be the Poisson kernel for $B$, where $x\in B$, $\xi \in S$, and
\[ P[u](x)=\int\limits_{S} P(x,\xi) u(\xi) d\sigma (\xi) \]
the Poisson integral of continuous function $u$ on $S$, where $\sigma$ denotes the normalized surface-area measure on $S$. Then $P[u](x)$ is continuous on $\overline{B}$ and harmonic on $B$. Since we will focus on continuous function $u$ on $\overline{B}$, that are harmonic on $B$,  then we will usually express them using the Poisson integral as
\[ u=P[u|_{S}](x).\]

A homeomorphism $f:U\to V$, where $U,V$ are domains in $\mathbb{R}^n$, will be called $K$ quasiconformal (see \cite{Vaisala}) $(K\geq 1)$ if $f$ is absolutely continuous on lines (i.e. absolutely continuous in almost every segment parallel to some of the coordinate axes and there exist partial derivatives which are locally $L^{n}$ integrable in $U$) and
\[ |\nabla f(x)|\leq K l(\nabla f(x)),  \]
for all points $x\in U$, where
\[ l(\nabla f(x))=\inf\{|f'(x) h|: |h|=1\}. \]
\\[0.2cm]

A function $\Phi:U\subset \mathbb{R}^n \to \mathbb{R}$ is said to be $\mu$-H\"{o}lder continuous, $\Phi\in C^{\mu}(U)$ if
\[ \sup_{x,y\in U, x\neq y} \frac{|\Phi(x)-\Phi(y)|}{|x-y|^{\mu}}<\infty.\]
Similarly, one defines the class $C^{1,\mu}(U)$ to consist of all functions $\Phi\in C^1(U)$ such that $\nabla \Phi \in C^{\mu}(U)$.
The above two definitions extends in a natural way to the case of vector-valued mappings.

We say that a domain $\Omega \subset \mathbb{R}^n$ has $C^{1,\alpha}$ boundary if there is a  $C^{1,\alpha}$ diffeomorphism $G:\overline{B}\to \overline{\Omega}$. 
\\

Pavlovi\'c in \cite{Pavlovic1} showed that harmonic quasiconformal mappings of the unit disk in $\mathbb{R}^2$ onto itself are bi-Lipschitz mappings. From then, several important results have been obtained regarding harmonic quasiconformal mappings in $\mathbb{R}^2$ and the Lipschitz continuity. The second author in \cite{Kalaj1} proved that every quasiconformal harmonic mapping between Jordan domains with $C^{1,\alpha}$ boundaries is Lipschitz continuous on the closure of domain. The result in \cite{Kalaj1} was extended in \cite{Kalaj2} for Jordan domains with only Dini's smooth boundaries. Lately, in \cite{Kalaj6} it was proved the H\"{o}lder continuity (but in general, Lipschitz continuity does not hold) of a harmonic quasiconformal mapping between two Jordan domains  having only $C^{1}$ boundaries. Other important results for $n=2$ with different conditions and settings can be found in \cite{Arsenovic}, \cite{BozinMat}, \cite{Gehring}, \cite{Kalaj4}, \cite{Kalaj5},  \cite{KalajMat}, \cite{KalajPavlovic}, \cite{Manojlovic}, \cite{Martio}, \cite{Martio1}, \cite{Partyka}, \cite{Partyka1} and in their references. \\
For higher dimensional case there are some important results also (see e.g. \cite{Astala},\cite{Kalaj3}, \cite{KalajZl}, \cite{MatVuo}). In \cite{Kalaj3} it was proven that a quasiconformal mapping of the unit ball onto a domain with $C^2$ smooth boundary, satisfying Poisson differential inequality, is Lipschitz continuous. This implies that harmonic quasiconformal mappings from unit ball $B$ to $\Omega$ with $C^2$ boundary are Lipschitz continuous.
This was also proved by Astala and Manojlovic in \cite{Astala} using a slight modification of the following statement also proved there: a harmonic $K$-quasiconformal mapping from $B$ to $B$ is Lipschitz with the Lipschitz constant depending on the value of $K$, dimension of $n$ and dist$(f(0), S)$.
\\[0.3cm]
Our main result generalizes the result in \cite{Kalaj1} and improves the mentioned corollaries in \cite{Astala} and \cite{Kalaj3}. It reads as follow.
\begin{theorem} \label{glavna}
Let $f:B \to \mathbb{R}^n$ be a quasiconformal harmonic (qch) mapping, $f(B)=\Omega$, and $\partial \Omega \in C^{1,\alpha}$. Then $f$ is Lipschitz continuous in $B$.
\end{theorem}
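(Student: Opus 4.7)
The plan is a bootstrap on the H\"older regularity of $f$. Whenever $f\in C^{\mu}(\overline B)$ with $\mu(1+\alpha)\le 1$, I show $f\in C^{\mu(1+\alpha)}(\overline B)$; after finitely many iterations the exponent reaches $1$, i.e.\ $f$ becomes Lipschitz. The engine of each step is a boundary-decay estimate $\dist(f(x),\partial\Omega)\le C(1-|x|)^{\mu(1+\alpha)}$ combined with the interior gradient estimate for the harmonic mapping $f$ and the qc distortion bound.

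First I would establish that $f$ extends continuously to $\overline B$ with $f(S)=\partial\Omega$ and that the extension is H\"older continuous of some initial exponent $\mu_0>0$. The continuous extension follows from the boundary behavior of qc maps onto quasiballs, which $\Omega$ is since $\partial\Omega\in C^{1,\alpha}$ is in particular Lipschitz. The initial H\"older exponent is supplied by V\"ais\"al\"a-type distortion estimates for qc maps in $\mathbb R^n$.

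The heart of the argument is the iterative distance estimate. Fix a defining function $\rho\in C^{1,\alpha}(\mathbb R^n)$ with $\Omega=\{\rho>0\}$, $\rho=0$ on $\partial\Omega$, and $|\nabla\rho|\ge c>0$ on $\partial\Omega$, so that $\rho(y)$ is comparable to $\dist(y,\partial\Omega)$ near $\partial\Omega$. For $x\in B$ near $S$, let $\xi_0=x/|x|\in S$ and Taylor-expand $\rho$ about $f(\xi_0)\in\partial\Omega$, using $\rho(f(\xi_0))=0$:
\[
\rho(f(x))=\nabla\rho(f(\xi_0))\cdot(f(x)-f(\xi_0))+R(x),\qquad |R(x)|\le C|f(x)-f(\xi_0)|^{1+\alpha}.
\]
By the Poisson representation, $f(x)-f(\xi_0)=\int_S P(x,\xi)(f(\xi)-f(\xi_0))\,d\sigma(\xi)$. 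The $C^{1,\alpha}$ regularity of $\partial\Omega$ is precisely the statement that the tangent hyperplane at $f(\xi_0)$ approximates $\partial\Omega$ to order $1+\alpha$, so
\[
|\nabla\rho(f(\xi_0))\cdot(f(\xi)-f(\xi_0))|\le C|f(\xi)-f(\xi_0)|^{1+\alpha},\quad \xi\in S.
\]
Inserting the current H\"older estimate $|f(\xi)-f(\xi_0)|\le C|\xi-\xi_0|^{\mu}$ into the integrand and evaluating the standard bound $\int_S P(x,\xi)|\xi-\xi_0|^{s}\,d\sigma(\xi)\le C(1-|x|)^{\min\{1,s\}}$ produces
\[
\rho(f(x))\le C(1-|x|)^{\min\{1,\mu(1+\alpha)\}}.
\]

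From this decay, V\"ais\"al\"a's distortion bound gives $\mathrm{osc}_{B(x,(1-|x|)/2)}f\le C\,\dist(f(x),\partial\Omega)$, and the interior gradient estimate for harmonic mappings yields $|\nabla f(x)|\le C(1-|x|)^{\min\{1,\mu(1+\alpha)\}-1}$. Integrating along paths lifts $f$ to $C^{\min\{1,\mu(1+\alpha)\}}(\overline B)$ and closes the induction. The main obstacle lies in keeping the constants uniform across the iteration: in particular the borderline exponent $\mu(1+\alpha)=1$, where the Poisson integral $\int_S P(x,\xi)|\xi-\xi_0|\,d\sigma(\xi)$ picks up a logarithmic factor in $1-|x|$ that must be absorbed carefully (e.g.\ by reformulating the bootstrap so that it terminates just below the borderline and is promoted to Lipschitz by one final qc-plus-interior-estimate argument) so that the final Lipschitz bound remains finite.
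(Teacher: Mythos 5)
Your proposal is correct and follows the same overall architecture as the paper: an initial H\"older exponent supplied by Mori/V\"ais\"al\"a distortion, a gain of the factor $1+\alpha$ at each step coming from the second-order flatness of the $C^{1,\alpha}$ boundary in the normal direction, and termination once the exponent exceeds $1$, with the borderline exponent $\mu(1+\alpha)=1$ sidestepped (your ``stop just below $1$ and do one last step'' is exactly the paper's device of perturbing $\beta$ so that $(1+\alpha)^k\beta\neq 1$ and then invoking a separate $\mu>1$ estimate). Where you genuinely differ is in how boundary flatness is converted into a gradient bound. The paper works in adapted coordinates at each boundary point, shows the normal component $F_n=\Phi(\tilde F)$ of the boundary values satisfies $|F_n(\xi)-F_n(\eta)|\le M|\xi-\eta|^{(1+\alpha)\mu}$, feeds this into bespoke Poisson-kernel derivative estimates (its Theorems 2.1 and 2.2) to bound $|\nabla f_n(r\eta)|$, transfers the bound to the other components by the pointwise dilatation inequality $|\nabla f_j|\le K\,l(\nabla f)\le K|\nabla f_n|$, and upgrades the radial gradient decay to H\"older continuity via Pavlovi\'c's theorem. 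You instead bound the scalar quantity $\rho(f(x))\asymp\dist(f(x),\partial\Omega)$ through the Poisson representation and the standard kernel estimate $\int_S P(x,\xi)|\xi-\xi_0|^s\,d\sigma\le C(1-|x|)^{\min\{1,s\}}$, and then pass to $|\nabla f(x)|$ via the quasihyperbolic/Whitney-ball oscillation bound $\mathrm{osc}_{B(x,(1-|x|)/2)}f\le C\dist(f(x),\partial\Omega)$ combined with the interior gradient estimate for harmonic functions. This buys a cleaner, coordinate-free step that treats all components at once and avoids the rotation-by-isometry bookkeeping; the price is importing the (standard but nontrivial) qc oscillation-versus-distance estimate, where the paper only needs the pointwise dilatation inequality. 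Two small points to make explicit when writing it up: the tangent-plane inequality $|\nabla\rho(f(\xi_0))\cdot(q-f(\xi_0))|\le C|q-f(\xi_0)|^{1+\alpha}$ must be asserted for \emph{all} $q\in\partial\Omega$, which holds by compactness since for $|q-f(\xi_0)|$ bounded away from zero the left side is trivially dominated (this is the analogue of the paper's step from its local estimate to the global one on $S$); and the constants in the graph/defining-function representation must be uniform over $\partial\Omega$, which again follows from compactness exactly as in the paper's choice of the uniform radius $\delta$.
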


 The proof of the corresponding result for 2-dimensional case in \cite{Kalaj1} uses conformal mappings, however conformal mappings in higher-dimensional setting are very rigid, and this is why we need to find another way to deal with the proof of Theorem~\ref{glavna}. The initial idea lies on the following simple approach. Let $\eta \in S$ and $f(\eta)=q \in \partial \Omega$. We can suppose that $q=0$ and the tangent plane of $q$ at $\partial \Omega$ is $x_n=0$. This can be obtained in the following way: Using a isometry $L$ we can postcompose $f$ such that we get a function $\tilde{f}$ from $B$ to $\Omega'$, $\tilde{f}(\eta)=0$ and the tangent plane of this point on $\partial \Omega'$ is $x_n=0.$ Observe that $\tilde{f}$ is also harmonic and quasiconformal, because it is composed by a isometry. The Lipschitz continuity for function $\tilde{f}$ would yield the proof of this property for the function $f$ also, because the isometry preserves the distances.

The proof is given in Section $3.$  It uses an iteration procedure.  Before that, in next section, we give some basic preparations through Theorems $\ref{calpha}$-$\ref{pavlovic}$.

\section{Auxiliary results}

The next theorem is of general interest; on the other side it plays an important role in proving Theorem $\ref{glavna}$. Some versions of it for $n=2$ can be found in \cite{Goluzin} and \cite{Nitsche}.

\begin{theorem}
\label{calpha}
Let $u:\overline{B}\subset\mathbb{R}^n \to \mathbb{R}$, $n \geq 3,$ be a real harmonic function, $\eta \in S$. Assume that $|u(\xi)-u(\eta)|\leq M |\xi-\eta|^{\mu}$, $\forall \xi \in S$,  for some $\mu\in (0,1)$. Then we have $C=C(M,\mu,n)$ such that
\[ |\nabla u(x)| (1-|x|)^{1-\mu}\leq C,\] where $x=r\eta$, $r\in [0,1).$

\end{theorem}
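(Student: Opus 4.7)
Since the assertion is invariant under adding a constant to $u$, I would first reduce to the case $u(\eta)=0$, so that the hypothesis becomes $|u(\xi)|\le M|\xi-\eta|^\mu$ for all $\xi\in S$. Writing $u=P[u|_S]$ and differentiating the Poisson representation under the integral sign gives
\[
\nabla u(x)=\int_S \nabla_x P(x,\xi)\,u(\xi)\,d\sigma(\xi),
\]
so the problem reduces to a kernel estimate for $\nabla_x P(x,\xi)$.

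Next I would compute
\[
\nabla_x P(x,\xi)=\frac{-2x}{|x-\xi|^{n}}-\frac{n(1-|x|^2)(x-\xi)}{|x-\xi|^{n+2}}
\]
and use $(1-|x|^2)\le 2(1-|x|)\le 2|x-\xi|$ to obtain the uniform bound $|\nabla_x P(x,\xi)|\le C_n/|x-\xi|^n$. Combined with the Hölder hypothesis this gives
\[
|\nabla u(x)|\le C_nM\int_S \frac{|\xi-\eta|^\mu}{|x-\xi|^n}\,d\sigma(\xi).
\]
The key geometric observation at this point is that for $x=r\eta$ with $r\in[0,1)$, the identity $|x-\xi|^2=(1-r)^2+r|\xi-\eta|^2$ holds; in particular, writing $\rho:=1-r$, one has $|x-\xi|^2\ge \rho^2+r|\xi-\eta|^2$.

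I would then pass to polar-type coordinates on $S$ centered at $\eta$, using that the $(n-1)$-dimensional surface measure of the set $\{\xi\in S:|\xi-\eta|\in[t,t+dt]\}$ is comparable to $t^{n-2}\,dt$. Treating separately the easy regime $r\le 1/2$ (where both $1-r$ and $|x-\xi|$ are bounded away from $0$, so a bound in terms of $\|u\|_{L^\infty(S)}\le M\cdot 2^\mu$ suffices) and the regime $r\ge 1/2$, the estimate becomes
\[
|\nabla u(x)|\le C\int_0^{2}\frac{t^{n-2+\mu}}{(\rho^2+t^2)^{n/2}}\,dt.
\]
The substitution $t=\rho s$ turns this into $\rho^{\mu-1}\int_0^{2/\rho}\frac{s^{n-2+\mu}}{(1+s^2)^{n/2}}\,ds$, and since $\mu<1$ the improper integral at $\infty$ converges, giving a bound independent of $\rho$. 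This yields $|\nabla u(x)|\le C(M,\mu,n)\,(1-r)^{\mu-1}$, which is exactly the claim.

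The main obstacle I expect is the careful kernel bookkeeping: showing that the gradient of the Poisson kernel really is dominated by $|x-\xi|^{-n}$ (rather than the naive $|x-\xi|^{-n-1}$ coming from the second term), and then justifying the reduction from an integral on $S$ to a one-dimensional integral in the radial distance $t=|\xi-\eta|$ with the correct $t^{n-2}$ weight. Everything else is a clean scaling argument, and the restriction $\mu<1$ enters precisely as the convergence condition $n-2+\mu-n<-1$ for the integral $\int_0^\infty s^{n-2+\mu}(1+s^2)^{-n/2}\,ds$.
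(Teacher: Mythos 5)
Your proposal is correct and follows essentially the same route as the paper: reduce to $u(\xi)-u(\eta)$ under the integral, bound the gradient of the Poisson kernel by $C_n|x-\xi|^{-n}$ using $1-|x|\le|x-\xi|$, and reduce to a one-dimensional radial integral which, after the scaling $t=(1-r)s$, converges precisely because $\mu<1$. The only cosmetic difference is that you pass to polar-type coordinates on $S$ with the $t^{n-2}$ weight, while the paper invokes the slice-integration formula from Axler--Bourdon--Ramey and handles the same two regimes $r\ge\tfrac12$ and $r<\tfrac12$; these are the same computation.
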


\begin{proof} Throught the proof, the constant $C$ can change its value. Using the Poisson integral formula we have
\[ u(x)=\int\limits_{S} \frac{1-|x|^2}{(1+|x|^2-2\langle \xi, x \rangle)^{\frac{n}{2}}} u(\xi) d\sigma (\xi).\]
Observe that
\begin{equation} \label{000001}
\nabla u(x) =\int\limits_{S} Q(x,\xi) u(\xi) d\sigma (\xi),
\end{equation}
where
\begin{equation} \label{002}
\begin{split}
Q(x,\xi)&=\frac{(-2x)(1+|x|^2-2\langle \xi, x \rangle)^{\frac{n}{2}}-n(1-|x|^2)(1+|x|^2-2\langle \xi, x \rangle)^{\frac{n}{2}-1}(x-\xi) }{(1+|x|^2-2\langle \xi, x \rangle)^{n}}\\
&=\frac{(-2x)(1+|x|^2-2\langle \xi, x \rangle)-n(1-|x|^2)(x-\xi) }{(1+|x|^2-2\langle \xi, x \rangle)^{\frac{n}{2}+1}} \\
&=\frac{(-2x)(1+|x|^2-2\langle \xi, x \rangle)-n(1-|x|^2)(x-\xi) }{(1+|x|^2-2\langle \xi, x \rangle)}\cdot \frac{1}{(1+|x|^2-2\langle \xi, x \rangle)^{\frac{n}{2}}}.
\end{split}
\end{equation}
Let $h\in R^n$ be an arbitrary vector.
Then
\begin{equation} \label{001}
\langle \nabla u(x), h \rangle =\int\limits_{S} \langle Q(x,\xi), h \rangle u(\xi) d\sigma (\xi).
\end{equation}
Since $(\ref{001})$ is true for every harmonic function $u:\overline{B} \to \mathbb{R}$, taking the constant function $u(\eta)$, we get
\begin{equation}
0=\int\limits_{S} \langle Q(x,\xi), h \rangle u(\eta) d\sigma (\xi),
\end{equation}
which, together with $(\ref{001})$, gives us
\begin{equation}  \label{003}
\langle \nabla u(x), h \rangle =\int\limits_{S} \langle Q(x,\xi), h\rangle [u(\xi)-u(\eta)] d\sigma (\xi).
\end{equation}
On the other side
\begin{equation}\label{004}
\begin{split}
& \left|\frac{-2\langle x, h \rangle(1+|x|^2-2\langle \xi, x \rangle)-n(1-|x|^2)\langle x-\xi, h \rangle }{(1+|x|^2-2\langle \xi, x \rangle)}\right| \\
& \leq 2|x||h|+n\frac{(1-|x|^2)|x-\xi||h|}{|x-\xi|^2}\leq  \\
& = 2|x||h| +2n|h| \frac{1-|x|}{|x-\xi|} \leq (2+2n)|h|.
\end{split}
\end{equation}
In the last inequality it is used the fact that $1-|x|\leq|x-\xi|$, which is obviously true from the geometrical point of view, but it is also equivalent to $\langle \xi, x \rangle \leq |x|$ (Cauchy-Schwarz inequality).

From  $(\ref{002}), (\ref{003}), (\ref{004})$ we get
\begin{equation}
|\langle \nabla u(x), h \rangle|\leq (2n+2)|h| \int\limits_{S} \frac{|u(\xi)-u(\eta)|}{(1+|x|^2-2\langle \xi, x \rangle)^{\frac{n}{2}}} d\sigma (\xi)
\end{equation}
As $h$ was taken arbitrary, then
\begin{equation}
|\nabla u(x)|\leq (2n+2) \int\limits_{S} \frac{|u(\xi)-u(\eta)|}{(1+|x|^2-2\langle \xi, x \rangle)^{\frac{n}{2}}} d\sigma (\xi),
\end{equation}
which is equivalent to
\begin{equation} \label{330}
\begin{split}
|\nabla u(r\eta)| &\leq (2n+2)\int\limits_{S} \frac{|u(\xi)-u(\eta)|}{(1+r^2-2r\langle \xi, \eta \rangle)^{\frac{n}{2}}} d\sigma (\xi)\\
&=(2n+2)\int\limits_{S} \frac{|u(\xi)-u(\eta)|}{((1-r)^2+r|\xi-\eta|^2)^{\frac{n}{2}}} d\sigma (\xi),
\end{split}
\end{equation}
where $x=r\eta$, $r=|x| \in [0,1).$

Using the condition of the theorem we get
\begin{equation} \label{33}
|\nabla u(r\eta)| \leq M(2n+2) \int\limits\limits_{S} \frac{|\xi-\eta|^{\mu}}{((1-r)^2+r|\xi-\eta|^2)^{\frac{n}{2}}}d\sigma(\xi).  \\
\end{equation}
\\[0.3cm]
Because of the symmetry, it is enough to show the required inequality for $\eta=(1,0,\ldots,0)$.  \\[0.2cm]
\textbf{1st case} $r=|x| \geq \frac{1}{2}.$
\\
As the integrand function in $(\ref{33})$ depends only on the first coordinate of $\xi$, we use the following representation (\cite{Axler}, Appendix A5):

\[ |\nabla u(r\eta)| \leq M (2n+2) C_1\int_{-1}^{1} \int\limits\limits_{S_{n-2}} \frac{(2-2x)^{\frac{\mu}{2}}}{((1-r)^2+r(2-2x))^{\frac{n}{2}}} (1-x^2)^{\frac{n-3}{2}} d\sigma_{n-2}(\zeta)dx,  \]
where $\sigma_{n-2}$ denotes the respective normalized surface-area measure on the unit sphere $S_{n-2}$ in $\mathbb{R}^{n-1}$. The constant $C_1$ depends on $n$ and the volumes of the unit balls in $\mathbb{R}^n$ and $\mathbb{R}^{n-1}$. From this, it follows
\begin{equation}
\begin{split} 
 |\nabla u(r\eta)| & \leq 
 C \int\limits\limits_{S_{n-2}} d\sigma_{n-2}(\zeta) \int_{-1}^{1} \frac{(2-2x)^{\frac{\mu}{2}}}{((1-r)^2+r(2-2x))^{\frac{n}{2}}} (1-x^2)^{\frac{n-3}{2}}dx \\
&\leq C \int_{-1}^{1}  \frac{(2-2x)^{\frac{\mu}{2}}}{((1-r)^2+r(2-2x))} \frac{2^{\frac{n-3}{2}}(1-x)^{\frac{n-3}{2}}}{{((1-r)^2+r(2-2x))^{\frac{n-2}{2}}}  } dx \\
& = C 2^{\frac{\mu}{2}} 2^{\frac{n-3}{2}} \int_{-1}^{1}  \frac{(1-x)^{\frac{\mu-1}{2}}}{((1-r)^2+r(2-2x))} \left(\frac{1-x}{(1-r)^2+r(2-2x) }\right)^{\frac{n-2}{2}} dx. 
\end{split} 
\end{equation}
Since $r\geq \frac{1}{2}$, we easily get
\[ \frac{1-x}{(1-r)^2+r(2-2x)}\leq \frac{1-x}{(1-r)^2+(1-x)}\leq 1,\]
so \begin{equation}\label{299}
|\nabla u(r\eta)| \leq C \int_{-1}^{1}  \frac{(1-x)^{\frac{\mu-1}{2}}}{(1-r)^2+(1-x)}dx. 
\end{equation}

First, using the substitution $1-x=t^2$, then $s=\frac{t}{1-r}$, we have
\[ 
|\nabla u(r\eta)| \leq C \int_{0}^{\sqrt{2}}  \frac{2t^{\mu}}{(1-r)^2+t^2}dt=
 C \frac{(1-r)^{\mu}}{(1-r)^2} \int_{0}^{\frac{\sqrt{2}}{1-r}}  \frac{2s^{\mu}}{1+s^2}(1-r)ds,
\]
so
\[
|\nabla u(r\eta)| \leq C (1-r)^{\mu-1} \int_{0}^{\infty} \frac{s^{\mu}}{1+s^2}ds. \]

As the last integral converges we finally have
\begin{equation} \label{006} |\nabla u(r\eta)| (1-r)^{1-\mu} \leq C, \end{equation} for $r\in [\frac{1}{2},1)$, where $C$ depends on $M, \mu$ and $n$ only. \\[0.5cm]

\textbf{2nd case} $r=|x| < \frac{1}{2}$
\\ 
Here the proof is quite straightforward. Since
\begin{equation} \label{018} \frac{|\xi-\eta|^{\mu}(1-r)^{1-\mu}}{{((1-r)^2+r|\xi-\eta|^2)^{\frac{n}{2}}}}< \frac{2^{\mu}\cdot 1}{(\frac{1}{2})^n}=2^{n+\mu}, \end{equation}
using $(\ref{33})$ we get
\begin{equation} \label{007} |\nabla u(r\eta)| (1-r)^{1-\mu} \leq M (2n+2) 2^{n+\mu}.\end{equation}
\\[0.3cm]
We conclude that the inequality is true for all $r\in[0,1)$, with the final $C$ being the larger of the obtained constants on the RHS of $(\ref{006})$ and $(\ref{007})$.
\end{proof}

The idea of the proof in section $3$ will be based on obtaining locally the $C^{\mu}$ condition of $f$ on the unit sphere for $\mu<1$, by increasing $\mu$. In relation to a fixed point $\eta \in S$ this will, in one moment, give us a similar inequality as the one from Theorem $\ref{calpha}$, but for $\mu>1$. So, on this step, we need a different version of the previuos statement which is given in the following theorem. However, the proof of it is very similar to the proof of the previous one.

\begin{theorem}
\label{calpha2}
Let $u:\overline{B}\subset\mathbb{R}^n \to \mathbb{R}$, be a harmonic function, $\eta \in S$. Assume that $|u(\xi)-u(\eta)|\leq M |\xi-\eta|^{\mu}$, $\forall \xi \in S$,  for some $\mu > 1$. Then we have $C=C(M,\mu,n)$ such that
\[ |\nabla u(r\eta)| \leq C,\] for every $r\in[0,1).$

\end{theorem}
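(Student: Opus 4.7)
The plan is to mirror the proof of Theorem~\ref{calpha} up to the integral representation, and then exploit the assumption $\mu>1$ to bound the relevant one-dimensional integral uniformly in $r$ without the weight $(1-r)^{1-\mu}$. Observe that the derivation of inequality (\ref{33}) in the proof of Theorem~\ref{calpha} does not use $\mu<1$ anywhere: it only uses the Hölder estimate and the identities (\ref{000001})--(\ref{003}) together with (\ref{004}). Therefore, for any $\mu>0$ we have
\[
|\nabla u(r\eta)| \le M(2n+2)\int_{S} \frac{|\xi-\eta|^{\mu}}{((1-r)^2+r|\xi-\eta|^2)^{n/2}}\,d\sigma(\xi).
\]

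I will treat the cases $r<\tfrac{1}{2}$ and $r\ge \tfrac{1}{2}$ separately. The first case is routine: using $(1-r)^2+r|\xi-\eta|^2\ge (1-r)^2\ge \tfrac14$ and $|\xi-\eta|\le 2$, the integrand is bounded by an absolute constant, hence so is $|\nabla u(r\eta)|$; this is the analogue of (\ref{018})--(\ref{007}) (in fact even simpler, since we do not need the weight $(1-r)^{1-\mu}$).

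For $r\ge \tfrac12$, I would apply the same reduction to a one-dimensional integral as in the proof of Theorem~\ref{calpha} via the slicing formula of Axler, obtaining the analogue of (\ref{299}),
\[
|\nabla u(r\eta)| \le C\int_{-1}^{1} \frac{(1-x)^{(\mu-1)/2}}{(1-r)^2+(1-x)}\,dx,
\]
where the bound $(1-x)/((1-r)^2+r(2-2x))\le 1$ continues to hold for $r\ge\tfrac12$ regardless of the value of $\mu$. Substituting $1-x=t^{2}$ yields
\[
|\nabla u(r\eta)| \le C\int_{0}^{\sqrt{2}} \frac{2t^{\mu}}{(1-r)^2+t^{2}}\,dt.
\]

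The decisive step, and the only real difference from Theorem~\ref{calpha}, is the final estimate of this integral. Since $(1-r)^2+t^2\ge t^2$, we have $\frac{t^{\mu}}{(1-r)^2+t^{2}}\le t^{\mu-2}$, and the hypothesis $\mu>1$ guarantees $\mu-2>-1$, so
\[
\int_{0}^{\sqrt{2}} \frac{2t^{\mu}}{(1-r)^2+t^{2}}\,dt \le 2\int_{0}^{\sqrt{2}} t^{\mu-2}\,dt = \frac{2\cdot 2^{(\mu-1)/2}}{\mu-1},
\]
a finite constant independent of $r$. Combining the two cases gives $|\nabla u(r\eta)|\le C(M,\mu,n)$ for all $r\in [0,1)$, as claimed. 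No essential obstacle arises; the role of $\mu>1$ is precisely to ensure integrability of $t^{\mu-2}$ at the origin, which is exactly where the previous proof had to keep the weight $(1-r)^{1-\mu}$.
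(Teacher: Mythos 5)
Your proposal is correct and follows essentially the same route as the paper: the same reduction to the integral bound, the same case split at $r=\tfrac12$, and the same decisive step of dropping $(1-r)^2$ from the denominator and using $\mu>1$ for integrability (the paper does this directly in the $x$-variable, obtaining $\int_{-1}^{1}(1-x)^{(\mu-3)/2}\,dx=2^{(\mu+1)/2}/(\mu-1)$, which equals your constant after the substitution $1-x=t^2$).
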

\begin{proof}
The proof of the theorem for $r\in[\frac{1}{2},1)$ is identical to the previous theorem until ($\ref{299}$).
\begin{equation*} 
\int_{-1}^{1}  \frac{(1-x)^{\frac{\mu-1}{2}}}{(1-r)^2+(1-x)}dx \leq \int_{-1}^{1} (1-x)^{\frac{\mu-3}{2}}dx= \frac{2^\frac{\mu+1}{2}}{\mu-1}
\end{equation*}
shows that the inequality is true. \\[0.1cm]

For $r\in [0,\frac{1}{2})$, similar to $(\ref{018})$ we see that
\[ \frac{|\xi-\eta|^{\mu}}{{((1-r)^2+r|\xi-\eta|^2)^{\frac{n}{2}}}} \]
is bounded, so therefore again from $(\ref{33})$ we have our inequality.
\end{proof}
The next celebrated theorem will also be used.  The proof can be found in \cite{Mori}.
\begin{theorem} \textbf{(Mori's theorem)} \label{mori}
Let $g$ be a $K$-quasiconformal mapping of ${B}$ onto ${B}$, $n\geq2$, with $g(0)=0$. Then
\[ |g(x)-g(y)|\leq M(n,K) |x-y|^{\beta},\]
for all $x,y \in B$, where $\beta=K^{\frac{1}{1-n}}$.

\end{theorem}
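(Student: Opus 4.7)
Since Mori's theorem is a cornerstone of higher-dimensional quasiconformal distortion theory, the plan is to outline the classical argument via the conformal modulus of curve families; the sharp exponent $\beta=K^{1/(1-n)}$ reflects precisely how modulus is distorted by a $K$-quasiconformal map in dimension $n$.

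Fix distinct $x,y\in B$ and set $d=|x-y|$, $d'=|g(x)-g(y)|$. Since $d'\leq 2$, only small $d$ requires attention. Associate to $\{x,y\}$ a ring domain $R\subset B$ whose bounded complementary component is a continuum containing $0$, $x$, and $y$ (for example $[0,x]\cup[x,y]$), and whose unbounded component is $\mathbb{R}^n\setminus\overline{B}$. Because $g(0)=0$, the image $g(R)$ is a ring whose bounded complement contains $\{0,g(x),g(y)\}$ and whose unbounded complement is $\mathbb{R}^n\setminus\overline{B}$. Quasiconformality of $g$ yields $\Mod(R)\leq K\,\Mod(g(R))$. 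I would then combine this with a sharp upper bound for $\Mod(g(R))$ in terms of $d'$ (via the Teichm\"uller capacity $\tau_n$) and a sharp lower bound for $\Mod(R)$ in terms of $d$ (via the Gr\"otzsch capacity $\gamma_n$), and invert using the asymptotic $\gamma_n(t)\sim c_n(\log t)^{1-n}$ as $t\to\infty$ to extract the H\"{o}lder exponent $K^{1/(1-n)}$.

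The main obstacle is pinning down the sharp exponent. In dimensions $n\geq 3$ the capacities $\gamma_n$ and $\tau_n$ admit no closed form, so one must invoke Gehring's spherical symmetrization, the Loewner property of $\mathbb{R}^n$, and delicate asymptotic analysis of extremal rings to arrive at exactly $\beta=K^{1/(1-n)}$. In contrast, when $n=2$ elliptic-integral identities render the analogous computation essentially explicit and the proof is correspondingly more transparent. A secondary technical point is ensuring that the constant $M(n,K)$ remains uniform as $x,y$ approach $\partial B$; this is handled by noticing that near the boundary one of $d$ or $d'$ is bounded below, so that the nontrivial case reduces to a compact subregion where the modulus estimates above are applicable.
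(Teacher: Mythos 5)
The paper does not prove this statement at all: it is quoted as a known result with a pointer to the Fehlmann--Vuorinen reference \cite{Mori}, so there is no internal proof to compare yours against. Your outline does follow the same classical route as that reference (modulus of curve families, Gr\"otzsch and Teichm\"uller capacities, symmetrization), so the strategy is the right one in spirit.

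As a proof, however, the proposal has genuine gaps. First, everything quantitative is deferred: the upper bound for $\Mod(g(R))$ via $\tau_n$, the lower bound for $\Mod(R)$ via $\gamma_n$, the symmetrization lemma, and the asymptotics that convert these into $|g(x)-g(y)|\le M|x-y|^{\beta}$ are only named, and you explicitly park the derivation of the exponent under ``delicate asymptotic analysis.'' Since producing exactly $\beta=K^{1/(1-n)}$ \emph{is} the theorem, nothing is actually proved. Related to this, the single inequality you do write, $\Mod(R)\le K\,\Mod(g(R))$, is where the exponent is decided: with the paper's definition of $K$-quasiconformality via the linear dilatation $|\nabla f|\le K\,l(\nabla f)$, the modulus quasi-invariance constant is an inner/outer dilatation (a priori bounded by $K^{n-1}$, not $K$), and without sorting out which dilatation enters you cannot land on $K^{1/(1-n)}$ rather than a worse power. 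Second, the specific ring you chose fails: with bounded complementary continuum $[0,x]\cup[x,y]$ and unbounded component $\mathbb{R}^n\setminus\overline{B}$, the ring degenerates as $x\to\partial B$ even for fixed small $d=|x-y|$, so no lower bound for $\Mod(R)$ ``in terms of $d$'' alone exists. Your proposed fix --- that near the boundary ``one of $d$ or $d'$ is bounded below'' --- is false: both can be arbitrarily small with $x,y$ close together near $\partial B$, and obtaining a constant $M(n,K)$ uniform in exactly that regime is the hard part of Mori's theorem. The standard argument uses a different curve-family configuration (Teichm\"uller rings normalized by the origin and by $\min(|x|,|y|)$) precisely to handle this.
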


We collect now the following useful result. The proof can be found in \cite{Pavlovic}. We will formulate it in the form which corresponds to our notation and use.

\begin{theorem} \label{pavlovic} Let $u$ be a real harmonic function on $\overline{B}$ and  $\mu \in (0,1)$ such that
\begin{equation} \label{010} \left||u(r\eta)|-|u(\eta)|\right|\leq C (1-r)^{\mu}, \hspace{0.5cm} \text{$\forall r\in [0,1), \eta \in S$}, \end{equation}
where $C$ is independent of $r$ and $\eta$, then $u$ is $\mu$-H\"{o}lder continuous in $\overline{B}$, i.e.:
\[ |u(x)-u(y)|\leq M |x-y|^{\mu}, \]
for all $x,y\in \overline{B}$. \end{theorem}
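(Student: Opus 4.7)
The plan is to derive a uniform interior gradient estimate of the form $|\nabla u(x)| \le C'(1-|x|)^{\mu-1}$ from the radial hypothesis (\ref{010}), and then convert it into H\"older continuity on $\overline{B}$ by integrating along suitable paths. As a preliminary step, (\ref{010}) can be upgraded — using the continuity of $u$ on $\overline{B}$ and hence its global boundedness — to the cleaner form
\[
|u(r\eta) - u(\eta)| \le C_0 (1-r)^{\mu}, \qquad r\in[0,1),\ \eta\in S.
\]
Indeed, for $\eta$ with $u(\eta)\neq 0$ the signs of $u(r\eta)$ and $u(\eta)$ agree for $r$ close to $1$, so $||u(r\eta)|-|u(\eta)||=|u(r\eta)-u(\eta)|$; the remaining range of $r$ is absorbed by the trivial bound $2\|u\|_\infty$ together with the fact that $(1-r)^{\mu}$ is bounded below away from $r=1$, and a compactness argument on $S$ makes the constant uniform.

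Next, I would establish $|\nabla u(x)| \le C'(1-|x|)^{\mu-1}$ for every $x\in B$. For $x=r\eta$ set $\rho=(1-r)/2$. The standard Cauchy-type estimate for harmonic functions gives $|\nabla u(x)|\le c_n\,\rho^{-1}\sup_{y\in B_\rho(x)}|u(y)-u(x)|$. For $y=s\zeta\in B_\rho(x)$ one writes
\[
u(y)-u(x)=[u(s\zeta)-u(\zeta)]+[u(\zeta)-u(\eta)]-[u(r\eta)-u(\eta)];
\]
the two outer brackets are $O(\rho^{\mu})$ by the strengthened radial estimate, but the middle bracket requires tangential regularity of $u$ on $S$, which is precisely what we are trying to establish. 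The intended resolution is a bootstrap, starting from the trivial bound $|\nabla u(x)|\le c_n\|u\|_\infty/(1-|x|)$ (exponent $0$): this yields an initial H\"older exponent $\alpha_0>0$ for $u|_S$ via Step~3 below, which fed back into the decomposition raises the exponent in the gradient estimate, and the iteration — each pass improving the exponent by a fixed fraction of the gap to $\mu$ — converges to $\mu$.

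Finally, once $|\nabla u(x)|\le C'(1-|x|)^{\mu-1}$ is available, H\"older continuity on $\overline{B}$ is routine. For $x,y\in\overline{B}$ with $\delta=|x-y|$, if both lie in the shell $\{1-|z|\ge\delta\}$, the segment between them stays inside that shell and integration of the gradient bound gives $|u(x)-u(y)|\le C\delta^{\mu}$. Otherwise one first moves each point radially toward $S$ (controlled directly by the strengthened radial estimate) and then compares the resulting projections along an arc on a sphere slightly inside $S$, to which the gradient bound again applies. The principal obstacle I anticipate is Step~2: extracting a pointwise gradient estimate from purely radial information naturally calls for tangential regularity, and the bootstrap must be set up — likely with dyadic shells $\rho_k=2^{-k}\rho$ and a geometric-series argument — so that the exponent genuinely converges to the sharp value $\mu$ rather than stalling at a strictly smaller limit.
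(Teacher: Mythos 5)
The paper does not prove this statement at all: it is quoted (in adapted notation) from Pavlovi\'c's article \cite{Pavlovic}, whose entire subject is this theorem, so there is no in-paper argument to compare against and your proposal must stand on its own. It does not, and the decisive gap is the one you yourself flag as ``the principal obstacle.'' Your Step~2 bootstrap, as described, does not improve the exponent: if the current H\"older exponent of $u|_S$ is $\alpha<\mu$ with constant $A$, then in the decomposition $u(y)-u(x)=[u(s\zeta)-u(\zeta)]+[u(\zeta)-u(\eta)]-[u(r\eta)-u(\eta)]$ the two outer brackets contribute $O(\rho^{\mu})$ but the middle one contributes $A(c\rho)^{\alpha}$, so the Cauchy estimate returns $|\nabla u(x)|\lesssim \rho^{\alpha-1}$ and Lemma~\ref{015} returns $u\in C^{\alpha}$ again --- the iteration stalls at the very first pass. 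Known refinements (e.g.\ averaging the harmonic function $\langle x,\nabla u(x)\rangle=r\,\partial_r u(r\eta)$ over radial intervals of optimized length) do improve the exponent, but only to $\mu(1-2^{-k})$ after $k$ passes, so the sharp exponent $\mu$ is never attained in finitely many steps and the constants are not controlled in the limit. Reaching the endpoint requires a different mechanism --- the classical Hardy--Littlewood/semigroup argument via second radial derivatives ($\|\partial_r^2 u_r\|_\infty=O((1-r)^{\mu-2})$, integrated twice, plus a separate passage from radial to full gradient), or Pavlovi\'c's own argument --- none of which appears in your sketch. As written, the proposal is an accurate inventory of the difficulties rather than a proof.

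A secondary point: your preliminary upgrade from $\left||u(r\eta)|-|u(\eta)|\right|$ to $|u(r\eta)-u(\eta)|$ is justified incorrectly. The radius beyond which $u(r\eta)$ and $u(\eta)$ have the same sign degenerates to $1$ as $u(\eta)\to 0$, and $u|_S$ may have zeros, so no uniform $r_0$ exists and compactness of $S$ does not save the argument. The upgrade is nevertheless true, by a different route: if the signs disagree, the intermediate value theorem gives $r'\in(r,1)$ with $u(r'\eta)=0$; applying the hypothesis at $r'$ yields $|u(\eta)|\le C(1-r')^{\mu}\le C(1-r)^{\mu}$, hence $|u(r\eta)|\le 2C(1-r)^{\mu}$ and $|u(r\eta)-u(\eta)|\le 3C(1-r)^{\mu}$. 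This repair is worth recording, since it is exactly the point where the ``modulus'' form of the hypothesis --- the raison d'\^etre of \cite{Pavlovic} --- gets used; but it does not touch the main gap above.
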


Using the previous theorem we can easily prove the following lemma.
\begin{lemma} \label{015}  Let $u$ be a real harmonic function on $\overline{B}$ and $\mu \in (0,1)$ such that
\[ |\nabla u(r\eta)| \leq C (1-r)^{\mu-1}, \hspace{0.5cm} \text{ $\forall r\in(0,1), \eta \in S$,}\]
where $C$ does not depend on $r$ and $\eta$, then $u$ is $\mu$-H\"{o}lder continuous in $\overline{B}$.
\end{lemma}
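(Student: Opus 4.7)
The plan is to reduce directly to Theorem~\ref{pavlovic}. That theorem asks for the radial oscillation estimate $||u(r\eta)|-|u(\eta)||\le C(1-r)^{\mu}$, and by the reverse triangle inequality it suffices to produce the stronger bound $|u(\eta)-u(r\eta)|\le C(1-r)^{\mu}$ for every $\eta\in S$ and $r\in[0,1)$.

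To get this radial Hölder estimate, I would integrate the gradient along the radial segment from $r\eta$ to $\eta$. Since $u$ is continuous on $\overline{B}$ and $C^1$ on $B$, the fundamental theorem of calculus gives
\[
u(\eta)-u(r\eta)=\int_{r}^{1}\langle\nabla u(t\eta),\eta\rangle\,dt,
\]
where the improper integral at $t=1$ is legitimate because the hypothesis $|\nabla u(t\eta)|\le C(1-t)^{\mu-1}$ with $\mu\in(0,1)$ makes the integrand integrable (the exponent $\mu-1>-1$). Taking absolute values and applying Cauchy--Schwarz together with the gradient bound yields
\[
|u(\eta)-u(r\eta)|\le \int_{r}^{1}|\nabla u(t\eta)|\,dt\le C\int_{r}^{1}(1-t)^{\mu-1}\,dt=\frac{C}{\mu}(1-r)^{\mu}.
\]

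In particular, $\bigl||u(r\eta)|-|u(\eta)|\bigr|\le \frac{C}{\mu}(1-r)^{\mu}$ with a constant independent of $r$ and $\eta$, so the hypothesis of Theorem~\ref{pavlovic} is satisfied and $u$ is $\mu$-Hölder continuous on $\overline{B}$. There is essentially no obstacle here; the only point requiring a little care is justifying that one may integrate $\nabla u$ all the way to the boundary, which is handled by the integrability of $(1-t)^{\mu-1}$ together with the continuity of $u$ on $\overline{B}$.
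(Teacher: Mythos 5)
Your proposal is correct and follows essentially the same route as the paper: integrate $\nabla u$ along the radial segment from $r\eta$ to $\eta$, use the hypothesis to bound the integral by $\frac{C}{\mu}(1-r)^{\mu}$, and then invoke Theorem~\ref{pavlovic} via the reverse triangle inequality. Your extra remark about the integrability of $(1-t)^{\mu-1}$ justifying the integration up to the boundary is a sound point of care that the paper leaves implicit.
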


\begin{proof}
To prove this lemma, from Theorem $\ref{pavlovic}$, it is sufficient to show the relation $(\ref{010})$.

We have
\begin{equation}
u(\eta)-u(r\eta)=\int\limits_{\gamma_r} D_1 u dx_1 +\ldots + D_n u dx_n,
\end{equation}
where $\gamma_r$ is the radial segment with endpoints $r\eta$ and $\eta$.
\\
Therefore, we have
\begin{equation}
\begin{split}
 \left||u(r\eta)|-|u(\eta)|\right|& \leq |u(r\eta)-u(\eta)| \leq \int\limits_{r}^{1} |\langle \nabla u(t\eta), \eta \rangle| dt \\
& \leq  C \int\limits_{r}^{1} (1-t)^{\mu-1}dt \leq C \frac{(1-r)^{\mu}}{\mu}.
\end{split}
\end{equation}

\end{proof}


\section{Proof of the main result - Theorem \ref{glavna}}
\begin{proof}
First, let we prove the H\"{o}lder continuity of $f=(f_1,\ldots,f_n)$. Indeed, let $G$ be a quasiconformal diffeomorphism (recall that $\Omega$ has $C^{1,\alpha}$ boundary) from $B^n$ to $\Omega$ which is Lipschitz continuous mapping up to the boundary, such that $G(0)=f(0)$. Then the mapping $g=G^{-1} \circ f$ is a $K'$ quasiconformal mapping (as a composition of two quasiconformal mappings) of $B$ onto $B$, where $g(0)=0$. According to Mori's theorem \ref{mori}, there exists a constant $M_1(n,K')$ such that
\[|g(x)-g(y)|\leq M_1(n,K') |x-y|^{K'^{\frac{1}{1-n}}},\]
for all $x,y \in B^n$.

Since $f=G\circ g$, then $f$ satisfies a similar inequality, being a composition of Lipschitz and H\"{o}lder continuous functions:
\begin{equation} \label{mori1}|f(x)-f(y)|\leq C_1 |x-y|^{\beta}, \end{equation}
for all $x,y \in \overline{B^n}$, where $\beta \in(0,1)$, and the constant $C_1$ depends on $M_1$ and the Lipschitz constant of $G$.
\\

In view of the remark after the formulation of Theorem $\ref{glavna}$, there exists a neighbourhood $\mathcal{O}$ of the origin in $R^{n-1}$ which is the projection of $\partial \Omega \cap B(0,\rho)$ in $R^{n-1}$ and a $C^{1,\alpha}$ function $\Phi:\mathcal{O}\to\mathbb{R}$ such that $\partial \Omega \cap B(0,\rho)$ can be expressed as the graph of $\Phi$, i.e. points of $\partial \Omega \cap B(0,\rho)$ are of the form:
\begin{equation} \label{5}  (\zeta_1,\ldots ,\zeta_{n-1}, \Phi(\zeta_1,\ldots, \zeta_{n-1})), \end{equation} 
where $(\zeta_1,\ldots ,\zeta_{n-1}) \in \mathcal{O}$.  

The function $\Phi$ has the properties $\Phi(0,\ldots, 0)=0$ and $D_j\Phi(0,\ldots,0)=0$, for all $j \in \{1,2,\ldots, n-1\}$, and

\begin{equation} \label{3}
|\nabla \Phi(\zeta)-\nabla \Phi(\omega)|\leq C_2 |\zeta-\omega|^{\alpha}. \end{equation}
The constant $C_2$ is the same for all points $q\in \partial \Omega$, because of the $C^{1,\alpha}$ condition of $\partial \Omega$.
\\ Also,
\begin{equation} \label{1}|\Phi(\zeta)-\Phi(\omega)|=|\langle \nabla \Phi(c),\zeta-\omega\rangle|\leq |\nabla \Phi(c)||\zeta-\omega|, \end{equation}
where $c$ belongs to the segment $[\zeta,\omega]$.

Using $(\ref{3})$ we get
\begin{equation} \label{2}
\begin{split}
 |\nabla \Phi(c)|& \leq |\nabla \Phi(\zeta)|+|\nabla \Phi(c)-\nabla \Phi(\zeta)|
\\ & \leq C_2(|\zeta|^\alpha+|c-\zeta|^{\alpha}) \leq C_2 ( |\zeta|^{\alpha}+|\zeta-\omega|^{\alpha}),
\end{split}
\end{equation}

\begin{equation} \label{2'}
\begin{split}
|\nabla \Phi(c)|& \leq |\nabla \Phi(\omega)|+|\nabla \Phi(c)-\nabla \Phi(\omega)|
\\ &\leq C_2( |\omega|^\alpha+|c-\omega|^{\alpha}) \leq C_2 ( |\omega|^{\alpha}+|\zeta-\omega|^{\alpha}), \\
\end{split}
\end{equation}
which yields
\[ |\nabla \Phi(c)|\leq C_2 \min \{\{|\zeta|^{\alpha}, |\omega|^{\alpha}\}+|\zeta-\omega|^{\alpha}\} .\]
Therefore, from $(\ref{1})$ we have:
\begin{equation} \label{4} |\Phi(\zeta)-\Phi(\omega)|\leq C_2|\zeta-\omega|(\min\{|\zeta|^{\alpha}, |\omega|^{\alpha}\}+|\zeta-\omega|^{\alpha}),  \end{equation}
for all $\zeta, \omega$ in $\mathcal{O}.$
\\
Let $F=(F_1,\ldots,F_n)=f|_{S}$ or $P[F]=f$. Notice that $F$ is also $C^{\beta}$ in $S$. We will use the notation $\tilde{F}(\xi)=(F_1(\xi),\ldots, F_{n-1}(\xi)).$ $\tilde{F}$, as $F$, also satisfies $(\ref{mori1})$. In view of $(\ref{5})$ we have that in a small neighbourhood of $\eta$ in $S$, $F_n$ is of the form \[ F_n(\xi)=\Phi(F_1(\xi),\ldots,F_{n-1}(\xi)).\]
\\[0.01cm]
We may also assume that this neighbourhood of $\eta$ is of the form $V(\eta)=B(\eta, \delta) \cap S$, where $\delta$ is small enough positive constant for all $q\in \partial \Omega$. Indeed, let $\tilde{U}(q)=B(q,r_q)\cap \partial \Omega$ be the neighbourhood of $q$ in $\partial \Omega$ such that after the isometry $L_q$ (the one that sends $q$ to $0$ and which makes the plane $x_n=0$ the tangent plane of $\partial \Omega$ at point $0$), $L_q(\tilde{U}(q))$ is the neighbourhood of $0$ which is the graph of a function as in $(\ref{5})$. Furthermore, we can choose $r_q$ small enough, such that for every point  $p \in \tilde{U}(q)$, the image of  $\tilde{U}(q)$ under the respective isometry $L_p$ is a graph of a function.
\\
Observe now $U(q)=B(q,\frac{r_q}{2})\cap \partial \Omega$. The collection $\{U(q)\}_{q\in \partial \Omega}$ is a cover of $\partial \Omega$. As $\partial \Omega$ is compact, there exists a finite subcollection $\{U(q_k)\}_{k=1}^{m}$ which covers $\partial \Omega$. Let $\rho=\min \{ \frac{r_{q_1}}{2},\ldots, \frac{r_{q_m}}{2}\}$. Since $F$ is continuous on a compact, there is a $\delta>0$ such that if $|\xi_1-\xi_2|<\delta$, $\xi_1, \xi_2 \in S,$
then $|F(\xi_1)-F(\xi_2)|<\frac{\rho}{2}$. \\ This ensures that the image of every $V(\eta)=B(\eta, \delta) \cap S$ under $F$ is contained in a $B(q_j,r_{q_j})\cap \partial \Omega=\tilde{U}(q_j)$, and further, after the mentioned isometry is done, this image is the graph of a function as in $(\ref{5})$.
\\[0.2cm]

We get back to our fixed $\eta$, such that $f(\eta)=0$. Now
\begin{equation} \label{6}
\begin{split}
|F_n(\xi)-F_n(\eta)|&= |\Phi(\tilde{F}(\xi))-\Phi(0)| \\
&\leq C_2 |\tilde{F}(\xi)|(\min\{|\tilde{F}(\xi)|^{\alpha}, 0\}+|\tilde{F}(\xi)-0|^{\alpha})\\
&=C_2 |\tilde{F}(\xi)|^{1+\alpha}
\leq C_1^{1+\alpha} C_2|\xi-\eta|^{(1+\alpha)\beta},
\end{split}
\end{equation}
for all $\xi \in V(\eta)$.
The function $F_n$ is bounded, because $F=f|_{S}$ is bounded ($|F(\xi)|\leq \widetilde{M}$, for all $\xi\in S$), so if $\xi \in S \backslash V(\eta)$ then
\begin{equation} \label {7}
|F_n(\xi)-F_n(\eta)|\leq 2\widetilde{M} \leq \frac{2\widetilde{M}}{\delta^{(1+\alpha)\beta}} |\xi-\eta|^{(1+\alpha)\beta}.
\end{equation}
Taking $M=\max\{C_1^{1+\alpha}C_2, \frac{2\widetilde{M}}{\delta^{(1+\alpha)\beta}}\}$
we get
\begin{equation} \label{8}
|F_n(\xi)-F_n(\eta)| \leq  M|\xi-\eta|^{(1+\alpha)\beta},
\end{equation}
for all $\xi \in S$.  \\
Now, from Theorem $\ref{calpha}$, we have
\[ |\nabla f_n (r\eta)|\leq C(1-r)^{(1+\alpha)\beta-1}, \hspace{0.3cm} \text{$\forall r\in [0,1).$}\].
\\
As $f$ is quasiconformal mapping then
\[ \frac{\max\limits_{|h_1|=1} |f'(x)h_1|}{\min\limits_{|h_2|=1} |f'(x)h_2|}\leq K < \infty, \hspace{0.3cm} \text{$\forall x \in B$}.\]
Taking, $h_1=e_j$ and $h_2=e_n$, for $x=r\eta$ we have
\[|\nabla f_j(r\eta) | \leq K |\nabla f_n(r\eta) |\leq K\cdot C (1-r)^{(1+\alpha)\beta-1},\]
for all $j\in \{1,\ldots, n-1\}$.
\\
This implies \begin{equation} \label{012} |\nabla f_j(r\eta)|\leq C(1-r)^{(1+\alpha)\beta-1},\end{equation} where $C$ is a new global constant for all $j \in\{1,\ldots,n\},$ and all $r\in[0,1)$.
\\ We want to prove $(\ref{012})$ in ${B}$. Let $\eta_1\neq \eta$ be an arbitrary point on $S$ and $f(\eta_1)=q_1$. Let $L_{q_1}$ be the isometry that sends $q_1$ to $0$, with $x_n=0$ being the tangent plane of $L_{q_1}(\partial \Omega)$ at $L_{q_1}(q_1)=0$.

Let $L_{q_1}\circ f=\tilde{f}=(\tilde{f}_1, \ldots, \tilde{f}_n)$. Then $\tilde{f}$ has all the properties of the function $f$ with $\eta_1$ in place of $\eta$: at $\tilde{f}(\eta_1)=0$ the tangent plane of the surface $L_{q_1}(\partial \Omega)$ is  $x_n=0$ and $\tilde{f}(\eta_1)$ has a neighbourhood in $L_{q_1}(\partial \Omega)$ which can be expressed as a part of a graph of the form $(\ref{5})$. Using the same procedure, we conclude that
\[|\nabla \tilde{f}_j(r\eta_1)|\leq C(1-r)^{(1+\alpha)\beta-1},\] for all $j \in\{1,\ldots,n\},$ and all $r\in[0,1)$.  Constant $C$ is universal and it does not depend on $\eta_1$, because $\delta$ and $M$ are independent of the choice of $\eta \in S$.   \\
Since $f=L_{q_1}^{-1} \tilde{f}$, ($L_{q_1}^{-1}$ is also an isometry) we get
\[ f_j(\xi)=b_j+\sum_{k=1}^{n}a_{j,k}\tilde{f}_k(\xi), \]
$j\in \{1,\ldots,n\}$, so
\begin{equation} \label{013} \nabla f_j(\xi)=\sum_{k=1}^{n}a_{j,k}\nabla \tilde{f}_k(\xi), \end{equation}
where $\{a_{j,k}\}_{1\leq j,k\leq n}$ is an orthogonal matrix.
From $(\ref{013})$ we have:
\begin{equation} \label{009}
\begin{split}
 |\nabla f_j(\xi)| &\leq \sum_{k=1}^{n}|a_{j,k}||\nabla \tilde{f}_k(\xi)| \\
& \leq \left(\sum_{k=1}^{n} |\nabla \tilde{f}_k(\xi)|^2 \right)^{\frac{1}{2}}.
\end{split}
\end{equation}
In the last inequality it is used the Cauchy-Schwarz inequality and the orthogonality of matrix $\{a_{j,k}\}_{j,k=1}^{n}$. Taking $\xi=r\eta_1$ we get
\[  |\nabla f_j(r\eta_1)| \leq \sqrt{n}C (1-r)^{(1+\alpha)\beta-1}.   \]
As the point $\eta_1$ was arbitrary we conclude
\[ |\nabla f_j(x)|\leq C (1-r)^{(1+\alpha)\beta-1}, \hspace{0.3cm} \text{$r=|x|,$} \]
for all $x\in B,$ $j \in \{1, \ldots n\}.$ \\

From Lemma $\ref{015}$ it follows that $f_j \in C^{(1+\alpha)\beta}(\overline{B})$, for all $j\in \{1,\ldots,n\}$ and so $f \in C^{(1+\alpha)\beta}(\overline{B}).$

We could have chosen $\beta<\frac{1}{2}$ (by decreasing it, if necessary) so the numbers $(1+\alpha)^k\beta\neq 1$, for every $k$. As $1+\alpha>1$ there exists a unique integer $k_0$ such that $(1+\alpha)^{k_0} \beta<1$ and ($1+\alpha)^{k_0+1}\beta>1$.
Repeating the procedure, we get that $f \in C^{(1+\alpha)^2\beta}(\overline{B}), \ldots, C^{(1+\alpha)^{k_0}\beta}(\overline{B})$. Note that such procedure for two-dimensional setting and different purpose has been used in \cite{Nitsche} and in \cite{KalajLamel}.
Similar to $(\ref{6})$ it follows that $|F_n(\xi)-F_n(\eta)|\leq M |\xi-\eta|^{(1+\alpha)^{k_0+1}\beta}$, $\forall \xi \in S$. This time, using Theorem $\ref{calpha2}$ we obtain
\[ |\nabla f_n(r\eta)|\leq C, \hspace{0.5cm} \text{$\forall r\in [0,1)$}.\]
Using the same order of implications, first we get the same inequality for every $f_k$ on points $r\eta.$ Then, using the isometries, we get the inequality on every point of $B$ for a global constant $C$. This implies trivially, by mean value inequality, the Lipschitz continuity of function $f$ in $\overline{B}$.

\end{proof}

\section*{Acknowledgement} We are grateful to the anonymous referee for a number of corrections that have made this paper better.

\end{document}